\theoremstyle{definition}
\newtheorem{definition}{Definition}[section]
\theoremstyle{plain}
\newtheorem{lem}[definition]{Lemma}
\newtheorem{thm}[definition]{Theorem}
\newtheorem{prop}[definition]{Proposition}
\newtheorem{cor}[definition]{Corollary}
\theoremstyle{remark}
\newcommand{\cdfr}{C_{\text{df}}^r}
\newcommand{\cdfz}{C_{\text{df}}^0}
\newcommand{\csaz}{C_{\text{s.a.}}^0}
\newcommand{\rspec}{\operatorname{Spec}_r}
\newcommand{\nash}{\mathcal N}
\newcommand{\gl}{\operatorname{GL}}
\newcommand{\slin}{\operatorname{SL}}
\newcommand{\sk}{\operatorname{SK}_1}
\newcommand{\comm}{\operatorname{Comm}}
\newcommand{\supp}{\operatorname{supp}}
\begin{document}

\title[Whitehead group]{Whitehead group of the ring of smooth functions definable in an o-minimal structure}
\author[M. Fujita]{Masato Fujita}
\address{Department of Liberal Arts,
Japan Coast Guard Academy,
5-1 Wakaba-cho, Kure, Hiroshima 737-8512, Japan}
\email{fujita.masato.p34@kyoto-u.jp}

\begin{abstract}
The Whitney group $K_1(A)$ is isomorphic to $A^\times \times \sk(A)$ for some subgroup $\sk(A)$, where $A$ is a commutative ring and $A^\times$ denotes the set of units in $A$.
Consider an o-minimal expansion of a real closed field $\mathcal R=(R,0,1,+,\cdot,\ldots)$.
Let $M$ be an affine definable $C^r$ manifold, where $r$ is a nonnegative integer. 
We demonstrate its homotopy theorem and that the group $\sk(\cdfr(M))$ is isomorphic to $\sk(\cdfz(M))$, where $\cdfr(M)$ denotes the ring of definable $C^r$ functions on $M$.
\end{abstract}

\subjclass[2010]{Primary 03C64}

\keywords{o-minimal structure, Whitehead group}

\maketitle

\section{Introduction}\label{sec:intro}
O-minimal structure was initially a subject of mathematical logic, but it was realized that an o-minimal structure provides an excellent framework of geometry \cite{vdD}.
It is considered to be a generalization of semialgebraic geometry \cite{BCR}, and geometric assertions on semialgebraic sets are generalized to the o-minimal case, such as triangulation and trivialization \cite{vdD}. 

The Grothendieck ring $K_0(\csaz(M))$ of the ring of continuous semialgebraic functions on an affine semialgebraic manifold $M$ corresponds to a semialgebraic vector bundle over $M$.
The basic properties of $K_0(\csaz(M))$ is investigated in \cite[Section 12.7, Section 15.1]{BCR}. 
We can generalize the results to the o-minimal case \cite{Fujita}.
On the other hand, the Whitehead group $K_1(\cdfz(M))$ is not investigated in both the semialgebraic and o-minimal cases.

The Whitney group $K_1(A)$ is isomorphic to $A^\times \times \sk(A)$ for some subgroup $\sk(A)$, where $A$ is a commutative ring and $A^\times$ denotes the set of units in $A$.
We study the group $\sk(A)$ in the o-minimal setting.
The assertions similar to those on $K_0(\cdfr(M))$ hold true for $\sk(\cdfr(M))$.
We demonstrate its homotopy theorem and that the group $\sk(\cdfr(M))$ is isomorphic to $\sk(\cdfz(M))$, where $\cdfr(M)$ denotes the ring of definable $C^r$ functions on $M$.

The paper is organized as follows.
We first review the definitions in Section \ref{sec:def}. 
We show that the group $\sk(\cdfr(M))$ is isomorphic to $\sk(\cdfz(M))$ in Section \ref{sec:main1}.
Finally, we demonstrate the homotopy theorem in Section \ref{sec:homotopy}. 

\section{Definitions}\label{sec:def}
We fix an o-minimal expansion of a real closed field $\mathcal R=(R,0,1,+,\cdot,\ldots)$ throughout this paper.
The term `definable' means `definable in the o-minimal structure' in this paper.
A definable $C^r$ manifold means an affine definable $C^r$ manifold, where $r$ is a nonnegative integer. 
The notation $\cdfr(M)$ denotes the ring of definable $C^r$ functions on $M$.
We review the definition of the Whitehead group.
\begin{definition}
Let $A$ be a commutative ring.
The notation $\gl(n,A)$ denotes the general linear group of degree $n$ with entries in $A$.
We may assume that $\gl(n,A) \subset \gl(n+1,A)$ by identifying $g \in \gl(n,A)$ with $\left(\begin{array}{cc} g & 0\\ 0 & 1\end{array}\right) \in \gl(n+1,A)$.
Set $\gl(A) = \bigcup_{n=1}^{\infty} \gl(n,A)$.
The notation $\slin(A)$ is the normal subgroup of $\gl(n,A)$ consisting of matrices of determinant one.
The commutator subgroup $\comm(A)=[\gl(A),\gl(A)]$ of $\gl(A)$ is a normal subgroup of both $\gl(A)$ and $\slin(A)$.
The Whitehead group $K_1(A)$ of the ring $A$ is defined as $\gl(A)/\comm(A)$.
The notation $\sk(A)$ denotes the subgroup $\slin(A)/\comm(A)$ of $K_1(A)$.
The Whitney group $K_1(A)$ is isomorphic to $A^\times \times \sk(A)$, where $A^\times$ denotes the set of units in $A$, by \cite[Chapter III, Example 1.1.1]{Weibel}.
\end{definition}

\section{Equivalence of $\sk(\cdfr(M))$ with $\sk(\cdfz(M))$}\label{sec:main1}

Let $i$ and $j$ be distinct positive integers and $r \in A$.
The notation $e(i,j;r)$ denotes the elementary matrix which has one in every diagonal spot, has $r$ in the $(i,j)$-spot, and has zero elsewhere.
The commutator subgroup $\comm(A)$ of $\gl(A)$ is generated by the elements of the form $e(i,j;r)$ by \cite[Chapter III, Whitehead's Lemma 1.3.3]{Weibel}.
The assertions similar to the following three lemmas are found in \cite{Weibel}.
However, they are not summarized as lemmas.
\begin{lem}\label{lem:basic1}
Let $i$ and $j$ be distinct positive integers.
Let $c(i,j)$ be the matrix in $\gl(\mathbb Z)$ whose $(i,j)$-th entry is one, $(j,i)$-th entry is $-1$, 
the other entries in the $i$-th and $j$-th rows are zeros, and entries in the other rows coincide with those of the identity matrix.
The matrix $c(i,j)$ is an element of $\comm(\mathbb Z)$.
\end{lem}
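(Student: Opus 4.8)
The plan is to write $c(i,j)$ explicitly as a short product of elementary matrices and then quote the Whitehead's Lemma already invoked above, according to which every $e(i,j;r)$ belongs to $\comm(\mathbb{Z})$.

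First I would reduce to a $2\times 2$ computation. All rows of $c(i,j)$ other than the $i$-th and $j$-th agree with the corresponding rows of the identity matrix, and the same holds for each of the elementary matrices $e(i,j;s)$ and $e(j,i;s)$; moreover multiplication of such matrices never alters the rows outside $\{i,j\}$ nor introduces nonzero entries in columns outside $\{i,j\}$ within rows $i$ and $j$. Hence it suffices to check the claimed factorization on the $2\times 2$ block indexed by $i$ and $j$ (ordered so that $i$ comes first). On that block $c(i,j)$ is $\left(\begin{smallmatrix} 0 & 1\\ -1 & 0\end{smallmatrix}\right)$, the matrix $e(i,j;s)$ is $\left(\begin{smallmatrix} 1 & s\\ 0 & 1\end{smallmatrix}\right)$, and $e(j,i;s)$ is $\left(\begin{smallmatrix} 1 & 0\\ s & 1\end{smallmatrix}\right)$.

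Second, I would record the identity
\[
\begin{pmatrix} 1 & 1\\ 0 & 1\end{pmatrix}\begin{pmatrix} 1 & 0\\ -1 & 1\end{pmatrix}\begin{pmatrix} 1 & 1\\ 0 & 1\end{pmatrix}=\begin{pmatrix} 0 & 1\\ -1 & 0\end{pmatrix},
\]
which by the previous paragraph gives $c(i,j)=e(i,j;1)\,e(j,i;-1)\,e(i,j;1)$ in $\gl(\mathbb{Z})$. Since $1,-1\in\mathbb{Z}$, each of the three factors is an elementary matrix over $\mathbb{Z}$, hence lies in $\comm(\mathbb{Z})$ by \cite[Chapter III, Whitehead's Lemma 1.3.3]{Weibel}; as $\comm(\mathbb{Z})$ is a subgroup, the product $c(i,j)$ lies in $\comm(\mathbb{Z})$ as well.

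The only thing that genuinely needs checking is the displayed matrix identity, which is the classical expression of a quarter-turn as a product of three transvections; it is an immediate multiplication, so I do not anticipate any real obstacle.
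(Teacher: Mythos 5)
Your proof is correct and follows essentially the same route as the paper's: the identity $e(i,j;1)\,e(j,i;-1)\,e(i,j;1)=c(i,j)$, verified by the $2\times 2$ computation, together with the fact that elementary matrices generate $\comm(\mathbb Z)$. The only difference is that you spell out the reduction from general $(i,j)$ to the $2\times 2$ block, which the paper dismisses with ``the proof is similar in the other cases.''
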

\begin{proof}
We only prove the case in which $i=1$ and $j=2$.
The proof is similar in the other cases.
We have 
\begin{equation*}
e(1,2;1)e(2,1;-1)e(1,2;1)=
\left(\begin{array}{cc} 1 & 1\\  0 & 1 \end{array}\right)
\left(\begin{array}{cc} 1 & 0\\  -1 & 1 \end{array}\right)
\left(\begin{array}{cc} 1 & 1\\  0 & 1 \end{array}\right)
=
\left(\begin{array}{cc} 0 & 1\\  -1 & 0 \end{array}\right)\text{.}
\end{equation*}
\end{proof}

\begin{lem}\label{lem:basic2}
Let $A$ be a commutative ring.
Let $i$ and $j$ be distinct positive integers, and $r \in A^\times$.
Let $m(i,j;r)$ be the matrix in $\gl(A)$ whose $(i,i)$-th entry is $r$, $(j,j)$-th entry is $r^{-1}$, the other diagonal entries are ones and the non-diagonal entries are zeros.
The matrix $m(i,j;r)$ is an element of $\comm(A)$.
\end{lem}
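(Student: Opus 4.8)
The plan is to write $m(i,j;r)$ explicitly as a product of elementary matrices and their inverses; since $\comm(A)$ coincides with the subgroup of $\gl(A)$ generated by all $e(i,j;s)$ by \cite[Chapter III, Whitehead's Lemma 1.3.3]{Weibel}, and since the inverse of an elementary matrix is again elementary ($e(i,j;s)^{-1}=e(i,j;-s)$), this will place $m(i,j;r)$ inside $\comm(A)$.

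First I would, exactly as in the proof of Lemma \ref{lem:basic1}, reduce to the case $i=1$, $j=2$: the general case follows verbatim after restricting attention to the rows and columns indexed by $i$ and $j$. Next, for $a \in A^\times$ I would introduce
\[
w(a) \;=\; e(1,2;a)\,e(2,1;-a^{-1})\,e(1,2;a)
\]
and compute, by a routine $2 \times 2$ multiplication, that $w(a) = \left(\begin{array}{cc} 0 & a \\ -a^{-1} & 0 \end{array}\right)$. In particular $w(1) = c(1,2)$ in the notation of Lemma \ref{lem:basic1}, and a further short multiplication gives
\[
w(r)\,w(1)^{-1} \;=\; \left(\begin{array}{cc} r & 0 \\ 0 & r^{-1} \end{array}\right) \;=\; m(1,2;r)\text{.}
\]
Hence $m(1,2;r) = e(1,2;r)\,e(2,1;-r^{-1})\,e(1,2;r)\,\bigl(e(1,2;1)\,e(2,1;-1)\,e(1,2;1)\bigr)^{-1}$ is a product of elementary matrices, so it lies in $\comm(A)$; this proves the $2 \times 2$ case, and the general case follows from the reduction above.

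The computations are entirely mechanical, so there is no genuine obstacle here. The only points that require a little attention are keeping the signs straight in the $2 \times 2$ products and noting explicitly where the hypothesis is used: the inverse $a^{-1}$ in the definition of $w(a)$ (applied with $a = r$) is exactly the reason the statement assumes $r \in A^\times$ rather than merely $r \in A$.
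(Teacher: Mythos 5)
Your proposal is correct and is essentially the paper's own argument: the paper likewise computes $e(1,2;r)e(2,1;-r^{-1})e(1,2;r)\cdot\left(\begin{smallmatrix}0&-1\\1&0\end{smallmatrix}\right)=m(1,2;r)$ and invokes Lemma \ref{lem:basic1} for the factor $\left(\begin{smallmatrix}0&-1\\1&0\end{smallmatrix}\right)=c(1,2)^{-1}=w(1)^{-1}$, which you merely unfold explicitly as a product of elementary matrices. Your $2\times 2$ computations check out, so nothing further is needed.
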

\begin{proof}
We only prove the case in which $i=1$ and $j=2$.
The proof is similar in the other cases.
We have 
\begin{equation*}
e(1,2;r)e(2,1;-r^{-1})e(1,2;r)\left(\begin{array}{cc} 0 & -1\\  1 & 0 \end{array}\right) = \left(\begin{array}{cc} r & 0\\  0 & r^{-1} \end{array}\right) \text{.}
\end{equation*}
It means that $m(1,2;r) \in \comm(A)$ by Lemma \ref{lem:basic1}.
\end{proof}

\begin{lem}\label{lem:basic3}
Let $T$ be a topological space.
Consider a subring $B$ of the ring of $R$-valued functions on $T$.
Let $A=(a_{ij})$ be an $n \times n$-matrix with $|a_{ij}|< \frac{1}{n-1}$ on $T$ for all $1 \leq i,j \leq n$ whose entries are in $B$.
We further assume that $I+A \in \slin(n,B)$.
There exist a positive integer $N$, pairs of positive integers $\{(i_k,j_k)\}_{k=1}^N$, and  $\{r_k \in B\}_{k=1}^N$ satisfying the following equality:
\begin{equation*}
I+A = \prod_{k=1}^N e(i_k,j_k;r_k) \text{.}
\end{equation*}
Here, the notation $I$ denotes the identity matrix.
\end{lem}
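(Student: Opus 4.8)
The plan is to argue by induction on $n$, running a Gaussian-elimination reduction of $I+A$ and tracking the bound $\frac{1}{n-1}$, which, it turns out, is reproduced exactly at the next stage so that the recursion closes. Concretely, I would prove the slightly stronger statement: if $Q=(q_{ij})$ is an $n\times n$ matrix over $B$ with $|q_{ij}-\delta_{ij}|<\frac{1}{n-1}$ on $T$ for all $i,j$ (here $\delta_{ij}$ denotes the Kronecker delta), then $Q=E\cdot\operatorname{diag}(1,\dots,1,\det Q)$ for some product $E$ of finitely many matrices $e(i,j;r)$ with $r\in B$. The lemma is then the case $\det Q=1$. The base case $n=1$ is immediate, the hypothesis being vacuous and $Q=\operatorname{diag}(\det Q)$.

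For the inductive step, fix $n\ge 2$. Since $|q_{11}-1|<\frac{1}{n-1}\le 1$, the function $q_{11}$ is nowhere zero on $T$, hence $q_{11}^{-1}\in B$ — here I use that the reciprocal of a nowhere-vanishing element of $B$ again lies in $B$, which holds for the rings $\cdfr(M)$ to which the lemma is applied, since $t\mapsto 1/t$ is $C^\infty$ (and definable) away from the origin. Right-multiplying $Q$ by $\prod_{j=2}^{n}e(1,j;-q_{11}^{-1}q_{1j})$ carries out the column operations $C_j\mapsto C_j-q_{11}^{-1}q_{1j}C_1$, turning the first row into $(q_{11},0,\dots,0)$ without altering the first column; left-multiplying the result by $\prod_{i=2}^{n}e(i,1;-q_{11}^{-1}q_{i1})$ then clears the first column below $q_{11}$ and, as the first row already has zeros in columns $2,\dots,n$, leaves it untouched. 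The outcome is $\operatorname{diag}(q_{11})\oplus P$ with $P=(p_{ij})_{2\le i,j\le n}$, $p_{ij}=q_{ij}-q_{11}^{-1}q_{1j}q_{i1}\in B$. The crux — and the reason $\frac{1}{n-1}$ is exactly the right constant — is the estimate, using $|q_{11}|>\frac{n-2}{n-1}$:
\[
|p_{ij}-\delta_{ij}|\le|q_{ij}-\delta_{ij}|+\frac{|q_{1j}|\,|q_{i1}|}{|q_{11}|}<\frac{1}{n-1}+\frac{1}{(n-1)(n-2)}=\frac{1}{n-2},
\]
so $P$ satisfies the hypothesis of the strengthened statement in dimension $n-1$ (this being vacuous when $n=2$, where $P$ is $1\times 1$).

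Applying the inductive hypothesis to $P$ — whose determinant $q_{11}^{-1}\det Q$ need not equal $1$ — I obtain $\operatorname{diag}(q_{11})\oplus P=E'\cdot\operatorname{diag}(q_{11},1,\dots,1,\det P)$ for some such product $E'$. Since $q_{11}\det P=\det Q$, one has $\operatorname{diag}(q_{11},1,\dots,1,\det P)=m(1,n;q_{11})\cdot\operatorname{diag}(1,\dots,1,\det Q)$, and by Lemma~\ref{lem:basic2} the matrix $m(1,n;q_{11})\in\comm(B)$ is itself a product of matrices $e(i,j;r)$ — legitimately, since $q_{11}^{-1}\in B$. Gathering the finitely many elementary factors completes the induction; specializing to $Q=I+A$ with $\det Q=1$ then gives the lemma.

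The step I expect to demand the most care is the bookkeeping: checking that clearing one row and one column really leaves the lower-right block in the stated form, that the bound sharpens from $\frac{1}{n-1}$ to exactly $\frac{1}{n-2}$ (so the induction genuinely closes), and — most delicately — that every multiplier occurring stays in $B$, which is what forces $q_{11}^{-1}\in B$, hence the standing assumption that $B$ contains reciprocals of its nowhere-vanishing elements, to be invoked at each level. A little attention is also needed at the degenerate end $n=1$ (where the empty product is the identity matrix) and in rewriting $m(1,n;q_{11})\in\comm(B)$ as an explicit word in elementary matrices using Lemmas~\ref{lem:basic1} and~\ref{lem:basic2}.
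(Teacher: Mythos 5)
Your proposal is correct and follows essentially the same route as the paper's own proof: Gaussian elimination clearing the first row and column, with the key estimate $\frac{1}{n-1}+\frac{n-1}{n-2}\left(\frac{1}{n-1}\right)^2=\frac{1}{n-2}$ showing the bound propagates to the $(n-1)\times(n-1)$ block, followed by reduction of the resulting diagonal matrix via Lemma \ref{lem:basic2}. Your explicit caveat that $q_{11}^{-1}$ must lie in $B$ is well taken --- the paper's proof uses $v=u^{-1}\in B$ without comment, so you have merely made explicit an assumption the paper also needs (and which holds for the rings $\cdfr(M)$ to which the lemma is applied).
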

\begin{proof}
We first diagonalize the matrix $I+A$ by multiplying matrices of the form $e(i,j;r)$ with $r \in B$.
Set $u=1+a_{11}$, then $|u|> 1 - \frac{1}{n-1}=\frac{n-2}{n-1}$.
Consider the inverse $v=u^{-1}$.
We have $|v| < \frac{n-1}{n-2}$.
For all $j \not=1$, subtract $va_{1j}$-times of the first column from the $j$-th column. 
This manipulation corresponds to the multiplication of $e(1,j;-va_{1j})$.
Let $I+A'$ be the result of the computation.
The first row of $I+A'$ is of the form $(u\ 0 \ \ldots \ 0)$.
Let $a'_{ij}$ be the $(i,j)$-th entry of $A'$.
We have
\begin{equation*}
|a_{ij}'|=|a_{ij}-va_{1j}a_{i1}| \leq |a_{ij}| + |va_{1j}a_{i1}| < \frac{1}{n-1}+\frac{n-1}{n-2} \left(\frac{1}{n-1}\right)^2=\frac{1}{n-2}
 \end{equation*}
 on $T$.
 Continuing this process, we can diagonalize $I+A$. 
 We can get the identity matrix by multiplying the diagonal matrix with matrices of the form $m(i,j;r)$.
They are the product of matrices of the form $e(i,j;r)$ by Lemma \ref{lem:basic2}.
\end{proof}

Lemma \ref{lem:basic3} claims that a matrix sufficiently close to the identity matrix is an element of $\comm(\cdfr(M))$.
It is very useful.

Let $M$ be a definable $C^r$ manifold, where $r$ is a nonnegative integer.
The Whitney group $K_1(\cdfr(M))$ is isomorphic to the group $(\cdfr(M))^\times \times \sk(\cdfr(M))$ by \cite[Chapter III, Example 1.1.1]{Weibel}.
The Whitney group $K_1(\cdfz(M))$ is isomorphic to the group $(\cdfz(M))^\times \times \sk(\cdfz(M))$ for the same reason.
In general, two groups $(\cdfr(M))^\times$ and $(\cdfz(M))^\times$ are not isomorphic.
However, the following theorem guarantees that the second components $\sk(\cdfr(M))$ and $\sk(\cdfz(M))$ are isomorphic.

\begin{thm}\label{thm:isom_whitehead}
Let $M$ be a definable $C^r$ manifold, where $r$ is a nonnegative integer.
The group $\sk(\cdfr(M))$ is isomorphic to $\sk(\cdfz(M))$.
\end{thm}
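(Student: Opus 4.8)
The plan is to prove that the ring inclusion $\iota\colon \cdfr(M)\hookrightarrow\cdfz(M)$ induces an isomorphism $\iota_*\colon\sk(\cdfr(M))\to\sk(\cdfz(M))$. The case $r=0$ is trivial, so assume $r\ge 1$. That $\iota_*$ is a well-defined homomorphism is immediate, since $\iota$ carries $\slin$ into $\slin$ and $\comm$ into $\comm$. The one external ingredient I would use is a definable $C^r$ approximation theorem: for every $f\in\cdfz(M)$ and every positive continuous definable function $\varepsilon\colon M\to R$ there is $\tilde f\in\cdfr(M)$ with $|f(x)-\tilde f(x)|<\varepsilon(x)$ for all $x\in M$. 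Coupled with Lemma~\ref{lem:basic3}, this allows one to replace a $C^0$ matrix by a nearby $C^r$ matrix without altering its class in $\sk$, the point being that the entries of the inverse (equivalently, of the adjugate) of a determinant-one matrix are fixed polynomials in the entries of the matrix, so closeness of matrices entails closeness of their inverses once a crude a priori bound on the entries is in hand.

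For surjectivity, start with $g\in\slin(n,\cdfz(M))$ representing a given class. Using the approximation theorem, choose a matrix $\tilde g$ with entries in $\cdfr(M)$ approximating those of $g$ within a positive definable function $\varepsilon$ that is taken so small (pointwise on $M$) that $\det\tilde g$ is everywhere bounded away from $0$, hence a unit of $\cdfr(M)$, and that the correction $h:=\tilde g\cdot D$, where $D$ is the diagonal matrix with $(1,1)$-entry $(\det\tilde g)^{-1}$ and all other diagonal entries equal to $1$, satisfies $h\in\slin(n,\cdfr(M))$ and $|(gh^{-1}-I)_{ij}|<\frac{1}{n-1}$ on $M$ for all $i,j$. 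Here $\varepsilon$ must decay fast enough to dominate the polynomial-in-$\max_{ij}|g_{ij}|$ bounds on the entries of $h^{-1}$ and of $g-h$; this is possible because those bounds are themselves positive definable functions of $x$. By Lemma~\ref{lem:basic3}, $gh^{-1}\in\comm(\cdfz(M))$, so $g$ and $h$ have the same class in $\sk(\cdfz(M))$, and that class lies in the image of $\iota_*$.

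For injectivity, suppose $h\in\slin(n,\cdfr(M))$ with $\iota_*[h]=0$, that is, $h\in\comm(\cdfz(M))$. Since $\comm(\cdfz(M))$ is generated by elementary matrices (Whitehead's lemma), write $h=\prod_{k=1}^{N}e(i_k,j_k;r_k)$ in $\gl(n',\cdfz(M))$ with $n'\ge\max(n,2)$ and $r_k\in\cdfz(M)$, and set $\tilde h:=\prod_{k=1}^{N}e(i_k,j_k;\tilde r_k)$ with $\tilde r_k\in\cdfr(M)$ approximating $r_k$; then $\tilde h\in\comm(\cdfr(M))$. The entries of $\tilde h-h$ are polynomials in the $r_k,\tilde r_k$ vanishing when $\tilde r_k=r_k$, and the entries of $h^{-1},\tilde h^{-1}$ are adjugate polynomials; hence, again choosing the approximation errors controlled by a sufficiently small positive definable function, one gets $|(h\tilde h^{-1}-I)_{ij}|<\frac{1}{n'-1}$ on $M$. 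Lemma~\ref{lem:basic3} then gives $h\tilde h^{-1}\in\comm(\cdfr(M))$, so $h=(h\tilde h^{-1})\,\tilde h\in\comm(\cdfr(M))$ and $[h]=0$ in $\sk(\cdfr(M))$.

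The main obstacle is the non-compactness of $M$. If $M$ is definably compact, every matrix in sight has bounded entries and a single small constant $\varepsilon$ suffices; in general the approximations must be measured against a positive definable function decaying quickly enough to defeat the (polynomially controlled) growth of the entries of the various inverse matrices. Organizing these estimates — together with the routine determinant correction that forces the approximating matrices into $\slin$ rather than merely $\gl$ — is the real content of the proof; everything else is formal manipulation with $\comm$ and Whitehead's lemma.
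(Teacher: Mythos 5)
Your proposal is correct and follows essentially the same strategy as the paper: both directions reduce to ``approximate, then absorb the discrepancy via Lemma~\ref{lem:basic3}'', with the approximation error measured against a positive definable continuous function precisely to defeat the non-compactness of $M$ — exactly the point the paper's estimates are organized around. Your injectivity argument is the paper's argument verbatim in substance: factor the $\comm(\cdfz(M))$-element into elementary matrices, approximate the off-diagonal entries by definable $C^r$ functions, and apply Lemma~\ref{lem:basic3} to $h\tilde h^{-1}$. The one genuine point of divergence is surjectivity: the paper invokes Escribano's approximation theorem for maps into the Nash submanifold $\slin(n,R)\subset R^{n^2}$, which directly produces $B\in\slin(\cdfr(M))$ near $A$, whereas you approximate entrywise in $R^{n^2}$ and then restore determinant one by multiplying by $\operatorname{diag}((\det\tilde g)^{-1},1,\dots,1)$. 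Your route needs only the scalar-function approximation theorem (the same ingredient already used for injectivity) at the cost of an explicit correction and slightly messier estimates; the paper's route outsources that work to the manifold-valued approximation theorem. Both are sound, and your estimates close correctly since the bounds on the adjugate entries are themselves positive definable continuous functions of $x$.
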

\begin{proof}
Since definable $C^r$ functions on $M$ are simultaneously definable continuous functions,  
there is a natural homomorphism $\Psi:\sk(\cdfr(M)) \rightarrow \sk(\cdfz(M))$.

We first show that the homomorphism $\Psi$ is injective.
We have only to show that any element $A \in \slin(\cdfr(M)) \cap \comm(\cdfz(M))$ is an element of $\comm(\cdfr(M))$.
There are a positive integer $K$, pairs of positive integers $\{(i_k,j_k)\}_{k=1}^K$ and definable continuous functions $\{c_k\}_{k=1}^K$ on $M$ with
\begin{equation*}
A = \prod_{k=1}^K e(i_k,j_k;c_k) \text{.}
\end{equation*}
Set $e_k=e(i_k,j_k;c_k)$ for all $1 \leq k \leq K$.
Take a positive integer $n$ so that $i_k \leq n$ and $j_k \leq n$ for all $1 \leq k \leq K$.
The notation $a_{ij}(x)$ denotes the $(i,j)$-th entry of $A^{-1}$.
Let $\delta(x)$ be a definable continuous function on $M$ satisfying the inequality 
\begin{equation*}
\max_{1 \leq i,j \leq n, \ 1 \leq k \leq K}(1,|a_{ij}(x)|,|c_k(x)|) < \delta(x)\text{.}
\end{equation*}
Take a positive definable continuous function $\varepsilon(x)$ on $M$ such that $\varepsilon(x)<\delta(x)$ and
\begin{equation*}
2^{K-1}n^KK\delta(x)^{K}\varepsilon(x)<\frac{1}{n-1} \text{.}
\end{equation*}
Using \cite{Escribano}, we can take definable $C^r$ approximations $c'_k$ of $c_k$ satisfying 
\begin{equation*}
|c'_k(x)-c_k(x)| < \varepsilon(x)
\end{equation*}
for all $1 \leq k \leq K$.
We have $|c'_k(x)|<2\delta(x)$.
Set $e'_k=e(i_k,j_k;c'_k(x))$.
The notation $\|C\|_{\text{max}}$ denotes the maximum of all the absolute values of entries in $C$ for any matrix $C$.
Note that $\|CD\|_{\text{max}} \leq n\|C\|_{\text{max}} \cdot \|D\|_{\text{max}}$ for all $n \times n$-matrices $C$ and $D$.
We have 
\begin{align*}
&\left\| \prod_{l=1}^K e_l - \prod_{l=1}^K e'_l\right\|_{\text{max}}
\leq \left\| (e_1-e'_1) \cdot \left(\prod_{l=2}^K e_l \right)\right\|_{\text{max}}
+  \left\| e'_1 \cdot (e_2-e'_2) \cdot \left(\prod_{l=3}^K e_l \right)\right\|_{\text{max}}\\
&\qquad + \cdots + \left\| \left(\prod_{l=1}^{K-1} e'_l \right) \cdot (e_K-e'_K)\right\|_{\text{max}}
\leq Kn^{K-1}(2\delta(x))^{K-1}  \varepsilon(x) \text{.}
\end{align*}
We get 
\begin{align*}
\left\| I - A^{-1}\prod_{l=1}^K e'_l\right\|_{\text{max}} &= \left\|A^{-1} \cdot\left(\prod_{l=1}^K e_l - \prod_{l=1}^K e'_l\right)\right\|_{\text{max}}\\
& \leq \left\|A^{-1}\right\|_{\text{max}} \cdot\left\|\left(\prod_{l=1}^K e_l - \prod_{l=1}^K e'_l\right)\right\|_{\text{max}}
 \leq 2^{K-1}n^KK\delta(x)^{K}\varepsilon(x)\\
 &<\frac{1}{n-1}\text{.}
\end{align*}
Note that $A^{-1}\prod_{l=1}^K e'_l$ is an element of $\slin(\cdfr(M))$ because $A$ is an element of $\slin(\cdfr(M))$.
We have 
\begin{equation*}
A^{-1}\prod_{l=1}^K e'_l \in \comm(\cdfr(M))
\end{equation*}
by Lemma \ref{lem:basic3}.
We have shown that $A \in \comm(\cdfr(M))$.
The homomorphism $\Psi$ is injective.

We next show that $\Psi$ is surjective.
Let $A \in \slin(\cdfz(M))$.
We assume that $A$ is an $n \times n$-matrix.
It suffices to show that $AE \in \slin(\cdfr(M))$ for some $E \in \comm(\cdfz(M))$.
Let $\delta(x)$ be a positive definable continuous function on $M$ with $\|A^{-1}\|_{\text{max}} < \delta(x)$.
Take a positive definable continuous function $\varepsilon(x)$ on $M$ with $n\delta(x)\varepsilon(x)<\frac{1}{n-1}$.
Since the special linear group $\slin(n,R)$ is a Nash submanifold of $ R^{n^2}$,
there exists $B \in \slin(\cdfr(M))$ with $\|B-A\|_{\text{max}}<\varepsilon(x)$ by \cite{Escribano}.
We have $\|A^{-1}B-I\|_{\text{max}} \leq n \|A^{-1}\|_{\text{max}} \cdot \|B-A\|_{\text{max}} < n\delta(x)\varepsilon(x)<\frac{1}{n-1}$.
Therefore, $E=A^{-1}B$ is an element of $\comm(\cdfz(M))$ by Lemma \ref{lem:basic3}.
We have $B=AE \in \slin(\cdfr(M))$.
\end{proof}

\section{Homotopy theorem}\label{sec:homotopy}
We demonstrate the homotopy theorem in this section.

\begin{lem}\label{lem:cover01}
Let $X$ be a definable subset of $ R^n$ and $\{V_j\}_{j=1}^p$ be a finite definable open covering of $X \times [0,1]$.
Then, there exist a finite definable open covering $\{U_i\}_{i=1}^q$ of $X$ and finite definable $C^r$ functions $0= \varphi_{i,0} < \cdots < \varphi_{i,k} < \cdots < \varphi_{i,r_i}=1$ on $U_i$ such that, for any $1 \leq i \leq q$ and $1 \leq k \leq r_i$, the definable set
\begin{equation*}
\{(x,t) \in U_i \times [0,1] \;|\; \varphi_{i,k-1}(x) \leq t \leq \varphi_{i,k}(x)\}
\end{equation*}
is contained in $V_j$ for some $1 \leq j \leq p$.
\end{lem}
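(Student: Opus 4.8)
The plan is to reduce the statement, by means of an o-minimal cell decomposition, to a fiberwise assertion over cells of $X$, and then to perform two successive thickenings: one in the $[0,1]$-direction, to pass from open ``bands'' to closed slabs each sitting inside a single $V_j$, and one in the transverse directions, to pass from a covering of $X$ by cells to a covering by open sets while keeping all the separating functions $C^r$.

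First I would apply the $C^r$ cell decomposition theorem to the finitely many definable sets $X\times[0,1]$, $X\times\{0\}$, $X\times\{1\}$, $V_1,\dots,V_p$ in $R^{n+1}=R^n\times R$, chosen so that projection to the first $n$ coordinates induces a $C^r$ cell decomposition of $R^n$ compatible with $X$ (for $r=0$ one reads ``$C^r$'' as ``continuous''). This presents $X$ as a finite disjoint union of $C^r$ cells $D_1,\dots,D_q$, and over each $D_i$ it furnishes definable $C^r$ functions $0=\zeta_{i,0}<\dots<\zeta_{i,s_i}=1$ such that every open band $\{(x,t):\zeta_{i,m-1}(x)<t<\zeta_{i,m}(x),\ x\in D_i\}$ and every graph $\{(x,\zeta_{i,m}(x)):x\in D_i\}$ is contained in $X\times[0,1]$, hence, since the $V_j$ cover and the decomposition is compatible with them, in some $V_j$.

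Next, over a fixed $D_i$ I would interpolate a finer stack whose \emph{closed} slabs each lie in a single $V_j$. Choose a definable $C^r$ function $\varepsilon_i\colon D_i\to R_{>0}$ so small that $3\varepsilon_i(x)<\zeta_{i,m}(x)-\zeta_{i,m-1}(x)$ for every $m$ and that the closed tubes $\{(x,t):|t-\zeta_{i,m}(x)|\le\varepsilon_i(x)\}$ about the interior graphs, together with $\{(x,t):0\le t\le\varepsilon_i(x)\}$ and $\{(x,t):1-\varepsilon_i(x)\le t\le 1\}$, stay inside the open set containing the corresponding graph; such an $\varepsilon_i$ exists because each constraint is a bound from below by a positive definable lower semicontinuous function, and one can minorize by a positive continuous — and then, using the $C^r$ approximation theorem already cited in the paper, by a positive $C^r$ — definable function. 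The stack $0,\ \varepsilon_i,\ \zeta_{i,1}-\varepsilon_i,\ \zeta_{i,1}+\varepsilon_i,\ \dots,\ \zeta_{i,s_i-1}+\varepsilon_i,\ 1-\varepsilon_i,\ 1$ is then strictly increasing, and each of its consecutive closed slabs either sits inside one of the open bands $(\zeta_{i,m-1},\zeta_{i,m})$ or inside an $\varepsilon_i$-tube about a graph, hence inside a single $V_j$. Write the result as $0=\theta_{i,0}<\dots<\theta_{i,N_i}=1$ on $D_i$.

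Finally I would thicken each $D_i$ to an open subset of $X$. After a permutation of the coordinates of $R^n$ the cell $D_i$ is the graph of a definable $C^r$ map $\sigma_i$ over an open cell $D_i'\subseteq R^{e_i}$, and $\Phi_i(y,z)=(y,\,z+\sigma_i(y))$ is a definable $C^r$ diffeomorphism of the open set $D_i'\times R^{n-e_i}$ onto itself carrying $D_i'\times\{0\}$ onto $D_i$; transporting the $\theta_{i,m}$ through $\Phi_i$ and extending them to be independent of the transverse variable $z$ yields definable $C^r$ functions $\Theta_{i,0}<\dots<\Theta_{i,N_i}$ on $D_i'\times R^{n-e_i}$. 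For each $m$ let $G_{i,m}$ be the set of $(y,z)$ for which the whole segment $\{\Phi_i(y,z)\}\times[\Theta_{i,m-1}(y,z),\Theta_{i,m}(y,z)]$ lies in the $V_j$ containing the $m$-th closed slab over $D_i$; this is a definable subset of $D_i'\times R^{n-e_i}$ containing $D_i'\times\{0\}$, and it is open, because a definably compact segment contained in the open set $V_j$ has positive distance to the complement of $V_j$ and nearby segments vary continuously. Put $W_i=\bigcap_{m=1}^{N_i}G_{i,m}$ — a definable open set containing $D_i'\times\{0\}$ — and $U_i=\Phi_i(W_i)\cap X$, an open subset of $X$ containing $D_i$. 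Then the functions $\varphi_{i,m}:=\Theta_{i,m}\circ\Phi_i^{-1}$ restricted to $U_i$ are definable and $C^r$, satisfy $0=\varphi_{i,0}<\dots<\varphi_{i,N_i}=1$, and each closed slab $\{(x,t)\in U_i\times[0,1]:\varphi_{i,m-1}(x)\le t\le\varphi_{i,m}(x)\}$ lies in a single $V_j$ by the definition of $W_i$. Since $D_i\subseteq U_i$ the sets $U_i$ cover $X$, so $\{U_i\}_{i=1}^q$ together with the functions $\varphi_{i,m}$ and $r_i:=N_i$ is the data required by the lemma. I expect this last step to be the principal point demanding care: the cells $D_i$ are not open in $X$ and the functions $\theta_{i,m}$ live only on $D_i$, so one must simultaneously extend them to open sets, keep them $C^r$, and keep the closed slabs inside the $V_j$ — and it is the straightening of $D_i$, the transverse-constant extension, and the openness (hence perturbation-robustness) of the $V_j$ that reconcile these demands.
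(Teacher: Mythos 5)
Your route --- a $C^r$ cell decomposition of $R^{n+1}$ compatible with $X\times[0,1]$, $X\times\{0\}$, $X\times\{1\}$ and the $V_j$, then an $\varepsilon$-refinement of each stack so that \emph{closed} slabs (not merely the open bands and the graphs) each land in a single $V_j$, then a thickening of each cell of $X$ to an open set carrying $C^r$ separating functions --- is exactly the strategy the paper points to (the proof is omitted here, with a reference to the slicing proof of \cite[Lemma 11.7.4, Remark 11.7.5]{BCR} and to \cite[Lemma 2.7]{Fujita}, with $C^r$ cell decomposition replacing semialgebraic slicing and the real spectrum). The argument is essentially sound, but one step needs repair: each $V_j$ is open only in $X\times[0,1]$, not in $R^{n+1}$, so the set $G_{i,m}$ as you literally define it (the set of $(y,z)$ whose segment lies in $V_j$) is forced to sit inside $\Phi_i^{-1}(X)$ and is therefore \emph{not} open in $D_i'\times R^{n-e_i}$ whenever $X$ fails to be open in $R^n$ --- which is the typical case, $X$ being a lower-dimensional affine manifold in the application; your positive-distance argument only yields openness relative to an ambient open set. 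The fix is routine: write $V_j=\widetilde V_j\cap(X\times[0,1])$ with $\widetilde V_j$ open in $R^{n+1}$, require in the definition of $G_{i,m}$ that the segment lie in $\widetilde V_j$ (so that $G_{i,m}$ is genuinely open, the distance being taken to the closed set $R^{n+1}\setminus\widetilde V_j$), and note at the end that a slab over $U_i\subseteq X$ with $t\in[0,1]$ automatically lies in $X\times[0,1]$ and hence in $V_j$. The same substitution is needed when you choose $\varepsilon_i$ so that the tubes about the graphs stay inside ``the open set containing the graph.'' With that change, the lower-semicontinuity/minorization step (e.g.\ via the Lipschitz inf-convolution, then Escribano's $C^r$ approximation) and the rest of your construction go through.
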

\begin{proof}
The semialgebraic counterpart is found in \cite[Lemma 11.7.4]{BCR}.
The real spectrum is used in the proof of \cite[Lemma 11.7.4]{BCR} but it can be proved by using slicing without the real spectrum as pointed out in \cite[Remark 11.7.5]{BCR}.
We get the lemma using the $C^r$ cell decomposition theorem \cite[Chapter 7, Theorem 3.2 and Chapter 7, Exercise 3.3]{vdD} in place of semialgebraic slicing. 
The complete proof is found in \cite[Lemma 2.7]{Fujita}.
We omit the proof.
\end{proof}

It is possible to demonstrate the following two lemmas following a standard argument using the fact that any closed definable set is the zero set of a definable $C^r$ function \cite[Theorem C.11]{vdDM}.
For instance, they are found in \cite[Lemma 2.3, Lemma 2.5]{Fujita}.
We omit the proofs.
\begin{lem}[Fine definable open covering]\label{lem:covering}
Let $M$ be a definable $C^r$ manifold with $0 \leq r < \infty$. 
Let $\{U_i\}_{i=1}^q$ be a finite definable open covering of $M$.
For each $1 \leq i \leq q$, 
there exists a definable open subset $V_i$ of $M$ satisfying the following conditions:
\begin{itemize}
\item  the closure $\overline{V_i}$ in $M$ is contained in $U_i$ for each $1 \leq i \leq q$, and 
\item  the collection $\{V_i\}_{i=1}^q$ is again a finite definable open covering of $M$. 
\end{itemize}
\end{lem}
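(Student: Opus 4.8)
The plan is to imitate the classical shrinking lemma, realising the required ``definable normality'' of $M$ through definable $C^r$ functions with prescribed zero sets. Since $M$ is a definable $C^r$ manifold with $r<\infty$, for every closed definable subset $F$ of $M$ there is a nonnegative definable $C^r$ function on $M$ whose zero set is exactly $F$, by \cite[Theorem C.11]{vdDM}; this is the only place where the hypothesis $r<\infty$ is used, and it is essentially the reason the lemma is stated with that restriction.

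Concretely, I would first fix, for each $1\le i\le q$, a nonnegative definable $C^r$ function $f_i$ on $M$ with $f_i^{-1}(0)=M\setminus U_i$. Because $\{U_i\}_{i=1}^q$ is a covering of $M$, at every point of $M$ at least one $f_i$ is strictly positive, so $f:=\sum_{i=1}^q f_i$ is a strictly positive definable $C^r$ function on $M$. Set $g_i:=f_i/f$. Each $g_i$ is then a definable $C^r$ function on $M$ with $g_i\ge 0$, with $\sum_{i=1}^q g_i\equiv 1$, and with $\{x\in M\mid g_i(x)>0\}=U_i$.

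Now I would take $V_i:=\{x\in M\mid g_i(x)>\tfrac{1}{q+1}\}$, which is a definable open subset of $M$. For the first bullet, continuity of $g_i$ gives $\overline{V_i}\subseteq\{x\in M\mid g_i(x)\ge\tfrac{1}{q+1}\}\subseteq\{x\in M\mid g_i(x)>0\}=U_i$. For the second bullet, at any $x\in M$ we have $\sum_{i=1}^q g_i(x)=1$, hence $g_{i_0}(x)\ge\tfrac1q>\tfrac1{q+1}$ for some index $i_0$, so $x\in V_{i_0}$; thus $\{V_i\}_{i=1}^q$ is again a finite definable open covering of $M$.

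The argument has no real obstacle beyond the cited input; the one point deserving care is that the zero-set representation must be achieved by a function that is simultaneously definable, $C^r$, and defined on all of $M$ (rather than merely on an ambient affine space), which is exactly what the standing assumption $r<\infty$ buys us. An alternative and equally standard route avoids the normalising function $f$ and shrinks the covering one index at a time: having produced $V_1,\dots,V_{i-1}$ with $\overline{V_j}\subset U_j$ and $V_1,\dots,V_{i-1},U_i,\dots,U_q$ still a covering, the set $C_i:=M\setminus(V_1\cup\cdots\cup V_{i-1}\cup U_{i+1}\cup\cdots\cup U_q)$ is closed definable and contained in $U_i$, and one separates $C_i$ from $M\setminus U_i$ by a definable $C^r$ Urysohn-type function formed from two prescribed-zero-set functions as above; either way the essential content is the same.
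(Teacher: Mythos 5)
Your proof is correct and follows exactly the route the paper indicates: the paper omits the argument, citing only that it is ``a standard argument using the fact that any closed definable set is the zero set of a definable $C^r$ function \cite[Theorem C.11]{vdDM}'' (with details in \cite[Lemma 2.3]{Fujita}), and your partition-of-unity construction $g_i=f_i/f$ with $V_i=\{g_i>\tfrac{1}{q+1}\}$ is precisely that standard argument, with the closure and covering verifications done correctly.
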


\begin{lem}\label{lem:sep}
Let $M$ be a definable $C^r$ manifold with $0 \leq r < \infty$. 
Let $X$ and $Y$ be closed definable subsets of $M$ with $X \cap Y = \emptyset$.
Then, there exists a definable $C^r$ function $f:M \rightarrow [0,1]$ with $f^{-1}(0)=X$ and $f^{-1}(1)=Y$.
\end{lem}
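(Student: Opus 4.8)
The plan is to deduce the separation statement directly from the cited representation of closed definable sets as zero sets of definable $C^r$ functions, by taking a suitable normalized quotient of two such functions.

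First I would apply \cite[Theorem C.11]{vdDM} to obtain definable $C^r$ functions $g,h\colon M\to R$ with $g^{-1}(0)=X$ and $h^{-1}(0)=Y$. (If the cited result is phrased for definable subsets of some $R^N$, first fix a definable $C^r$ embedding of $M$ as an affine definable $C^r$ submanifold, apply the result there, and restrict; a subset of $M$ that is closed in $M$ is exactly what the argument requires.) Replacing $g$ by $g^2$ and $h$ by $h^2$, I may assume $g\ge 0$ and $h\ge 0$ on $M$ while keeping the same zero sets, and both functions remain definable and of class $C^r$.

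Next, because $X\cap Y=\emptyset$, the sum $g+h$ is strictly positive at every point of $M$: at each $x$ at least one of $g(x)$, $h(x)$ is nonzero. Hence $g+h$ is a unit of $\cdfr(M)$, and
\begin{equation*}
f:=\frac{g}{g+h}
\end{equation*}
is a well-defined definable $C^r$ function on $M$. Since $0\le g\le g+h$, we have $0\le f\le 1$, so $f$ maps $M$ into $[0,1]$. Moreover $f(x)=0$ iff $g(x)=0$ iff $x\in X$, and $f(x)=1$ iff $g(x)=g(x)+h(x)$, i.e.\ $h(x)=0$, i.e.\ $x\in Y$. Thus $f^{-1}(0)=X$ and $f^{-1}(1)=Y$, as required.

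The argument is short, and the only steps deserving a word of care — the nearest thing to an obstacle here — are verifying that \cite[Theorem C.11]{vdDM} indeed delivers functions of class $C^r$ (rather than merely continuous) on the manifold $M$, and that squaring, adding, and dividing by a nowhere-vanishing function preserve both definability and membership in $\cdfr(M)$. Both are standard, so I would simply record them; no delicate estimate or additional o-minimality input beyond the cited theorem is needed.
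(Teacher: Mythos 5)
Your proof is correct and is precisely the ``standard argument'' the paper alludes to: it invokes \cite[Theorem C.11]{vdDM} to write $X$ and $Y$ as zero sets of nonnegative definable $C^r$ functions $g,h$ and then takes $f=g/(g+h)$, which is exactly the construction the paper cites and omits. No discrepancy to report.
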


The following proposition on the Nash manifold $\slin(n, R)$  is used in the proof the homotopy theorem.
\begin{prop}\label{prop:nash2}
There exists a finite semialgebraic open covering $\{\mathcal U_i\}_{i=1}^q$ of the Nash submanifold $\slin(n, R)$ of $ R^{n^2}$ such that, for any $1 \leq i \leq q$, there exist 
\begin{itemize}
\item a positive integer $K_i$,
\item finite positive integers $i(i,k)$ and $j(i,k)$ with $i(i,k) \leq n$ and $j(i,k) \leq n$, and
\item finite Nash functions $a_{ik}(x)$ on $\mathcal U_i$ for all $1 \leq k \leq K_i$
\end{itemize}
such that, for any $A \in \mathcal U_i$, the equality $A=\prod_{k=1}^{K_i}e(i(i,k),j(i,k),a_{ik}(A))$ holds.
\end{prop}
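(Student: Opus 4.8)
The plan is to run Gaussian elimination on a variable matrix $A \in \slin(n,R)$ and to exploit the fact that, $n$ being fixed, only finitely many pivoting patterns can occur. By a \emph{pivoting pattern} $\sigma$ I mean a finite list of instructions recording, at each stage of the reduction of the matrix towards the identity, which entry of the current matrix is chosen as the pivot and which row and column it is used to clear. For a fixed $\sigma$, let $\mathcal U_\sigma$ be the set of $A \in \slin(n,R)$ for which every pivot prescribed by $\sigma$ is nonzero when the reduction is actually carried out on $A$. Each such pivot is a rational function of the entries of $A$ whose denominator is a product of the earlier pivots, so by induction on the length of $\sigma$ the set $\mathcal U_\sigma$ is described inside $\slin(n,R)$ by finitely many conditions of the form ``a fixed polynomial in the entries of $A$ does not vanish''; hence $\mathcal U_\sigma$ is a semialgebraic open subset of $\slin(n,R)$. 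There are finitely many pivoting patterns, and the $\mathcal U_\sigma$ cover $\slin(n,R)$: given $A$, reduce it greedily, at each stage taking as pivot any nonzero entry of the not-yet-cleared part; that part stays invertible throughout, so such an entry exists, and we obtain a pattern $\sigma$ with $A \in \mathcal U_\sigma$.

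Next, on a fixed $\mathcal U_\sigma$ I would carry out the reduction symbolically. Clearing an entry by a shear uses a coefficient of the form $\pm a/b$, with $a$ an entry of the current matrix and $b$ the current pivot; since no pivot vanishes on $\mathcal U_\sigma$, all these coefficients are Nash functions on $\mathcal U_\sigma$. After left and right multiplication by a product of such shears, $A$ is brought to a matrix with exactly one nonzero entry in each row and column. Writing this as a signed permutation matrix times a diagonal matrix, the signed permutation matrix is a product of matrices $c(i,j)^{\pm 1}$, each of which is a product of elementary matrices by Lemma \ref{lem:basic1}, and there remains $\operatorname{diag}(d_1, \ldots, d_n)$ with $d_1 \cdots d_n = 1$ and every $d_i$ a nonvanishing Nash function on $\mathcal U_\sigma$ (up to sign, one of the pivots). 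Multiplying successively by $m(1,2; d_1^{-1})$, $m(2,3; (d_1 d_2)^{-1})$, \ldots, $m(n-1, n; (d_1 \cdots d_{n-1})^{-1})$ reduces this diagonal matrix to the identity; every $m(i,i+1;r)$ that occurs has $r$ a unit, hence is a product of elementary matrices by Lemma \ref{lem:basic2}, with Nash coefficients. Collecting all the factors and passing to inverses exhibits $A$ on $\mathcal U_\sigma$ as a product $\prod_{k=1}^{K_\sigma} e(i(\sigma,k), j(\sigma,k); a_{\sigma k}(A))$ in which $K_\sigma$ and the indices $i(\sigma,k), j(\sigma,k)$ are fixed and the $a_{\sigma k}$ are Nash on $\mathcal U_\sigma$. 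Relabelling the finite family $\{\mathcal U_\sigma\}$ as $\{\mathcal U_i\}_{i=1}^q$ gives the statement.

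I do not expect a serious obstacle: the argument is organized Gaussian elimination together with Lemmas \ref{lem:basic1} and \ref{lem:basic2}. The points that need a little care are the verification that each $\mathcal U_\sigma$ is semialgebraic and open (handled by the inductive description of the pivots as rational functions), the remark that finitely many pivoting patterns already exhaust $\slin(n,R)$ (this is what makes the covering finite), and the check that the final diagonal cleanup invokes $m(i,j;r)$ only for $r$ a unit, so that Lemma \ref{lem:basic2} genuinely applies.
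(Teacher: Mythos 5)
Your proof is correct, but it takes a genuinely different route from the paper's. The paper argues pointwise on the real spectrum $\rspec(\nash(\slin(n,R)))$: for each prime cone $\alpha$ it performs Gaussian elimination on $X(\alpha)$ over the real closed field $k(\alpha)$, spreads the resulting factorization out to a semialgebraic neighbourhood via \cite[Proposition 8.8.3]{BCR} at the cost of an error factor $I+M_\alpha$ with $M_\alpha(\alpha)=0$, absorbs that factor using Lemma \ref{lem:basic3} on the locus where $\|M_\alpha\|$ is small, and finally invokes compactness of the real spectrum to extract a finite subcover. You instead stratify $\slin(n,R)$ directly by the finitely many pivoting patterns, so finiteness is combinatorial (at most $(n!)^2$ patterns) rather than a consequence of spectral compactness, the coefficients come out as explicit rational functions with nonvanishing denominators (hence Nash), and neither Lemma \ref{lem:basic3} nor any real-spectrum machinery is needed -- only Lemmas \ref{lem:basic1} and \ref{lem:basic2} for the terminal signed-permutation and diagonal factors, exactly where you use them. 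The points you flag as needing care (openness and semialgebraicity of each $\mathcal U_\sigma$ via the inductive polynomial description of the pivots, exhaustiveness of the patterns because the uncleared block stays invertible, and unit-ness of the arguments fed to $m(i,j;r)$) are indeed the right ones, and your treatment of each is sound; one small bookkeeping item worth making explicit is that the sign of the permutation underlying the monomial matrix is determined by $\sigma$, hence constant on $\mathcal U_\sigma$, so the splitting into a determinant-one signed permutation and a diagonal matrix with $d_1\cdots d_n=1$ can be made uniformly on each piece. Your argument is more elementary and self-contained; the paper's is an instance of a general spreading-out-plus-compactness technique from \cite{BCR} that does not require identifying the combinatorial finiteness in advance, but is heavier than necessary here.
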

\begin{proof}
Let $X:\slin(n, R) \rightarrow \slin(n, R)$ be the identity map.
Consider an arbitrary prime cone $\alpha \in \rspec(\nash(\slin(n, R)))$.
Here, the notation $\nash(\slin(n, R))$ denotes the ring of Nash functions on $\slin(n, R)$ and $\rspec(\nash(\slin(n, R)))$ is its real spectrum.
Their definitions are found in \cite{BCR}.
The notation $k(\alpha)$ denotes the real closure of the quotient field of $\nash(\slin(n, R))/\supp(\alpha)$ under the ordering induced from the prime cone $\alpha$.
We may naturally consider that $X$ is an element of $\slin(n, \nash(\slin(n, R)))$.
The notation $X(\alpha)$ denotes the matrix in $\slin(n, k(\alpha))$ whose entries are the residue classes of the entries of $X$.
Applying the Gaussian elimination process, we can take
\begin{itemize}
\item a positive integer $K'_\alpha$,
\item finite positive integers $i'_\alpha(k)$ and $j'_\alpha(k)$ with $i'_\alpha(k) \leq n$ and $j'_\alpha(k) \leq n$, and
\item finite elements $b_{\alpha,k} \in k(\alpha)$ for all $1 \leq k \leq K'_\alpha$
\end{itemize}
such that $X(\alpha) = \prod_{k=1}^{K'_\alpha} e(i'_\alpha(k),j'_\alpha(k);b_{\alpha,k})$.
By \cite[Proposition 8.8.3]{BCR}, there exist a semialgebraic open subset $V_\alpha$ of $\slin(n, R)$ and Nash functions $c'_{\alpha,k}:V_\alpha \rightarrow  R$ and $m_{\alpha,ij}:V_\alpha \rightarrow  R$ satisfying the following conditions:
\begin{itemize}
\item $\alpha \in \widetilde{V_\alpha}$, 
\item $c'_{\alpha,k}(\alpha)=b_\alpha$, 
\item $m_{\alpha,ij}(\alpha)=0$, and
\item $X = (I+M_\alpha) \cdot \prod_{k=1}^{K_\alpha} e(i'_\alpha(k),j'_\alpha(k);c'_{\alpha,k})$ on $V_\alpha$, where $M_\alpha$ is the matrix whose $(i,j)$-th entry is $m_{\alpha,ij}$.
\end{itemize}
Set $U_\alpha=\left\{x \in V_\alpha\;|\; |m_{\alpha,ij}(x)| < \frac{1}{n-1} \text{ for all } 1 \leq i,j \leq n\right\}$.
The set $\widetilde{U_\alpha}$ contains the point $\alpha$ because $m_{\alpha,ij}(\alpha)=0$.
Applying Lemma \ref{lem:basic3},  we can take
\begin{itemize}
\item a positive integer $K''_\alpha$,
\item finite positive integers $i''_\alpha(k)$ and $j''_\alpha(k)$ with $i''_\alpha(k) \leq n$ and $j''_\alpha(k) \leq n$, and
\item finite Nash functions $c''_{\alpha,k}$ on $U_\alpha$ for all $1 \leq k \leq K''_\alpha$
\end{itemize}
such that $I+M_\alpha = \prod_{k=1}^{K''_\alpha} e(i''_\alpha(k),j''_\alpha(k);c''_{\alpha,k})$ on $U_\alpha$.
We have constructed 
\begin{itemize}
\item a positive integer $K_\alpha$,
\item finite positive integers $i_\alpha(k)$ and $j_\alpha(k)$ with $i_\alpha(k) \leq n$ and $j_\alpha(k) \leq n$, and
\item finite Nash functions $c_{\alpha,k}$ on $U_\alpha$ for all $1 \leq k \leq K_\alpha$
\end{itemize}
such that the equality $X = \prod_{k=1}^{K_\alpha} e(i_\alpha(k),j_\alpha(k);c_{\alpha,k}(X))$ holds true on $U_\alpha$.

The family $\{U_{\alpha}\}_{\alpha \in \rspec(\nash(M))}$ is an open covering of $\rspec(\nash(M))$.
Since $\rspec(\nash(M))$ is compact by \cite[Corollary 7.1.13]{BCR}, a finite subfamily $\{U_{j}\}_{j=1}^r$ of $\{U_{\alpha}\}_{\alpha \in \rspec(\nash(M))}$ covers $\rspec(\nash(M))$.
The family $\{V_j\}_{j=1}^r$ covers $M$ by \cite[Proposition 7.2.2]{BCR}.
By the definition of $U_j$, we obtain the proposition.
\end{proof}

\begin{thm}\label{thm:whitehead_homotopy}
Let $M$ be a definable manifold.
Assume that two elements $A,B \in \slin(\cdfz(M))$ are definably homotopic, that is; there exists a definable continuous function $H:M \times [0,1] \rightarrow \slin(n, R)$ with $H( \cdot,0)=A(\cdot)$ and $H( \cdot,1)=B(\cdot)$.
Then, the images of $A$ and $B$ in $\sk(\cdfz(M))$ are identical.
\end{thm}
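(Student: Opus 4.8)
The plan is to reduce to the statement that any matrix which is definably homotopic to the identity lies in $\comm$, and then to manufacture the required product of elementary matrices patch by patch on $M$. For the reduction, put $C(x,t):=H(x,1-t)B(x)^{-1}$; this is a definable continuous map $M\times[0,1]\to\slin(n,R)$ with $C(\cdot,0)=I$ and $C(\cdot,1)=AB^{-1}$, and since the quotient map $\slin(\cdfz(M))\to\sk(\cdfz(M))$ is a group homomorphism, the images of $A$ and $B$ in $\sk(\cdfz(M))$ coincide as soon as $AB^{-1}\in\comm(\cdfz(M))$. Thus it suffices to prove that if $C:M\times[0,1]\to\slin(n,R)$ is definable continuous with $C(\cdot,0)=I$, then $C(\cdot,1)\in\comm(\cdfz(M))$.

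First I would set up a good covering. Let $\{\mathcal U_j\}_{j=1}^p$ be the finite semialgebraic open covering of $\slin(n,R)$ furnished by Proposition~\ref{prop:nash2}, on each member of which the identity map of $\slin(n,R)$ is a product of elementary matrices with Nash coefficients. Then $\{C^{-1}(\mathcal U_j)\}_{j=1}^p$ is a finite definable open covering of $M\times[0,1]$, and Lemma~\ref{lem:cover01} (applied with $r=0$ and $M$ realised as a definable subset of some $R^n$) produces a finite definable open covering $\{U_i\}_{i=1}^q$ of $M$ and definable continuous functions $0=\varphi_{i,0}<\dots<\varphi_{i,r_i}=1$ on $U_i$ such that each slab $S_{i,k}=\{(x,t)\in U_i\times[0,1]\mid\varphi_{i,k-1}(x)\le t\le\varphi_{i,k}(x)\}$ satisfies $C(S_{i,k})\subseteq\mathcal U_{j(i,k)}$. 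Fix $i$ and set $h_{i,k}(x)=C(x,\varphi_{i,k}(x))$ on $U_i$, so that $h_{i,0}=I$ and $h_{i,r_i}=C(\cdot,1)|_{U_i}$. For $1\le k\le r_i$ the graphs of $\varphi_{i,k-1}$ and of $\varphi_{i,k}$ lie in $S_{i,k}$, so $h_{i,k-1}(x),h_{i,k}(x)\in\mathcal U_{j(i,k)}$; applying the factorization of Proposition~\ref{prop:nash2} attached to $\mathcal U_{j(i,k)}$ to each of these matrices shows that $h_{i,k-1}(x)^{-1}h_{i,k}(x)$ is a product of elementary matrices whose coefficients are definable continuous functions on $U_i$ (Nash coefficients composed with $h_{i,k-1}$ and $h_{i,k}$). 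Telescoping over $k$ exhibits $C(\cdot,1)|_{U_i}=\prod_{k=1}^{r_i}h_{i,k-1}^{-1}h_{i,k}$ as a product of such elementary matrices; in particular $C(\cdot,1)|_{U_i}\in\comm(\cdfz(U_i))$.

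It remains to pass from this local information to $C(\cdot,1)\in\comm(\cdfz(M))$. I would do so by processing the patches one at a time: using Lemma~\ref{lem:covering} choose a shrinking $\{W_i\}_{i=1}^q$ of $\{U_i\}$ with $\overline{W_i}\subseteq U_i$, and prove by induction on $l=0,1,\dots,q$ that $C(\cdot,1)$ is, modulo $\comm(\cdfz(M))$, represented by a matrix $C^{(l)}\in\slin(\cdfz(M))$ which equals the identity on a definable open neighbourhood of $\overline{W_1}\cup\dots\cup\overline{W_l}$; the case $l=q$ then gives $C(\cdot,1)\in\comm(\cdfz(M))$. For the step from $l-1$ to $l$, the inductive hypothesis and the previous paragraph show that $C^{(l-1)}|_{U_l}\in\comm(\cdfz(U_l))$, hence is an explicit product of elementary matrices with coefficients in $\cdfz(U_l)$; one multiplies $C^{(l-1)}$ on the right by a product of elementary matrices — itself an element of $\comm(\cdfz(M))$ — which, wherever a suitable cutoff equals $1$, is the inverse of this local product, the cutoff being a definable continuous function supplied by Lemma~\ref{lem:sep} that equals $1$ near $\overline{W_l}$ away from the part already handled and equals $0$ both near that part and outside $U_l$. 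This leaves the representative unchanged modulo $\comm(\cdfz(M))$, makes it the identity near $\overline{W_l}$, and does not disturb it near $\overline{W_1}\cup\dots\cup\overline{W_{l-1}}$.

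This globalisation is the \emph{main obstacle}, and its delicacy comes from the non-compactness of $M$. There is no uniform subdivision of the homotopy parameter — which is exactly why Lemma~\ref{lem:cover01} yields the $x$-dependent functions $\varphi_{i,k}$ rather than constants — and the coefficients appearing in the local factorizations obtained via Proposition~\ref{prop:nash2} need not stay bounded as $x$ approaches $\partial U_l$. The cutoff functions in the inductive step must therefore be chosen with controlled decay at the patch boundaries while still being identically $1$ on the required closed sets, and the successive neighbourhoods and cutoffs must be arranged so as to remain mutually compatible. Carrying this out is where one genuinely uses that the ambient structure is an o-minimal expansion of a real closed field — so that sufficiently many definable functions exist — together with Lemma~\ref{lem:basic3}, which on the region where the homotopy stays close to the identity allows one to replace the factorization by one with bounded coefficients.
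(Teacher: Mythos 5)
Your reduction to ``$C(\cdot,0)=I$ implies $C(\cdot,1)\in\comm(\cdfz(M))$'' and your local step are fine: pulling back the covering of Proposition~\ref{prop:nash2} through the homotopy, slicing with Lemma~\ref{lem:cover01}, and telescoping $h_{i,k-1}^{-1}h_{i,k}$ does exhibit $C(\cdot,1)|_{U_i}$ as a product of elementary matrices over $\cdfz(U_i)$. The genuine gap is in the globalisation. In your inductive step you multiply $C^{(l-1)}$ by the product $P_\theta(x)=\prod_k e\bigl(i_k,j_k;-\theta(x)a_{K+1-k}(x)\bigr)$ obtained by scaling the coefficients of the local factorization by a cutoff $\theta$. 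This $P_\theta(x)$ equals $(C^{(l-1)}(x))^{-1}$ only where $\theta(x)=1$ and equals $I$ only where $\theta(x)=0$; on the transition region $\{0<\theta<1\}$ it is an uncontrolled matrix, so there $C^{(l)}=C^{(l-1)}P_\theta$ is neither the identity nor equal to $C^{(l-1)}$. Since the sets $\overline{W_j}$ must overlap (they cover $M$) and $\theta$ must vanish outside $U_l$ while the previously cleaned sets $\overline{W_1},\dots,\overline{W_{l-1}}$ are not contained in $U_l$, the transition region unavoidably meets $\overline{W_1}\cup\dots\cup\overline{W_l}$; hence the inductive invariant ``$C^{(l)}=I$ on a neighbourhood of $\overline{W_1}\cup\dots\cup\overline{W_l}$'' cannot be maintained. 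The difficulty is not (as you suggest) the unboundedness of the coefficients near $\partial U_l$ — that can be absorbed into the cutoff — nor is it repaired by Lemma~\ref{lem:basic3}, since the homotopy need not stay close to the identity anywhere on the transition region.

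The paper avoids this by interpolating in the homotopy parameter rather than in the coefficients. It first upgrades your local step to a factorization of $H$ on all of $U_i\times[0,1]$ (its Claim 1, via the constant extensions $b_{ijk}$ and the telescoping product $\widetilde{H_i}$), and then replaces your operation ``force the matrix to be $I$ on $W_i$'' by ``push the time parameter up to $1$ on $W_i$'': with $r_i(x,t)=(x,\max(t,\varphi_i(x)))$, the correction factor $G(x)=H(x,\eta(x))^{-1}H(r_i(x,\eta(x)))$ is a quotient of two honest values of $H$, each admitting an elementary factorization on $U_i\times[0,1]$, and outside $\overline{W_i}$ the two factorizations are made to coincide literally (via a Tietze extension of the coefficients), so $G$ is globally a product of elementary matrices with no problematic transition zone. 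Composing $r_1,\dots,r_p$ then carries $t=0$ to $t=1$. To repair your argument you would essentially have to reproduce this device; in particular your endpoint-only factorization of $C(\cdot,1)|_{U_i}$ is too weak, and you need the full $t$-dependent factorization on $U_i\times[0,1]$.
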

\begin{proof}
Let $\{\mathcal U_i\}_{i=1}^q$ be the finite semialgebraic open covering of $\slin(n, R)$ given in Proposition \ref{prop:nash2}.
Set $V_i=H^{-1}(\mathcal U_i)$ for all $1 \leq i \leq q$.
The collection $\{V_i\}_{i=1}^q$ is a finite definable open covering of $M \times [0,1]$.
Applying Lemma \ref{lem:cover01}, we get
\begin{itemize}
\item a finite definable open covering $\{U_i\}_{i=1}^p$ of $M$,
\item finite definable $C^r$ functions $0= \varphi_{i,0} < \cdots < \varphi_{i,j} < \cdots < \varphi_{i,r_i}=1$ on $U_i$,
\item a positive integer $K_{ij}$ for all $1 \leq i \leq p$ and $1 \leq j \leq r_i$,
\item finite positive integers $i(i,j,k)$ and $j(i,j,k)$ with $i(i,j,k) \leq n$ and $j(i,j,k) \leq n$, and
\item finite definable continuous functions $a_{ijk}$ on $A_{ij}$ for all $1 \leq i \leq p$, $1 \leq j \leq r_i$ and $1 \leq k \leq K_{ij}$
\end{itemize}
such that 
\begin{equation*}
H(x,t)=\prod_{k=1}^{K_{ij}}e(i(i,j,k),j(i,j,k),a_{ijk}(x,t)) \text{,}
\end{equation*}
for any $(x,t) \in A_{ij}$, where
\begin{equation*}
A_{ij}=\{(x,t) \in U_i \times [0,1]\;|\; \varphi_{i,j-1}(x) \leq t \leq \varphi_{i,j}(x)\}\text{.}
\end{equation*}

We fix an integer $i$ with $1 \leq i \leq p$.
We show the following claim:

\medskip
{\bf{Claim 1.}} There exist 
\begin{itemize}
\item a positive integer $K_i$,
\item finite positive integers $i(i,k)$ and $j(i,k)$ with $i(i,k) \leq n$ and $j(i,k) \leq n$, and
\item finite definable continuous functions $a_{ik}(x,t)$ on $U_i \times [0,1]$ for all $1 \leq k \leq K_i$
\end{itemize}
such that the equality $H(x,t)=\prod_{k=1}^{K_i}e(i(i,k),j(i,k),a_{ik}(x,t))$ holds for any $x \in U_i$ and $0 \leq t \leq 1$.
\medskip

We begin to prove Claim 1.
We first define a definable continuous function $b_{ijk}$ on $U_i \times [0,1]$ by
\begin{equation*}
b_{ijk}(x,t)=\left\{\begin{array}{ll}
a_{ijk}(x,\varphi_{i,j-1}(x)) & \text{ if } t < \varphi_{i,j-1}(x) \text{,}\\
a_{ijk}(x,t) & \text{ if }  \varphi_{i,j-1}(x) \leq t \leq \varphi_{i,j}(x)\text{,}\\
a_{ijk}(x,\varphi_{i,j}(x)) & \text{ if } t > \varphi_{i,j}(x) \text{.}
\end{array}\right.
\end{equation*}
Consider the definable continuous maps $H'_{i,j}: U_i \times [0,1] \rightarrow \slin(n, R)$ given by
\begin{equation*}
H'_{i,j}(x,t)= \prod_{k=1}^{K_{ij}} e(i(i,j,k),j(i,j,k),b_{ijk}(x,t))
\end{equation*}
for all $(x,t) \in U_i \times [0,1]$.
We have $H'_{i,j}(x,t)=H(x,t)$ for all $(x,t) \in A_{i,j}$.
Set 
\begin{equation*}
\widetilde{H_i}(x,t)=H'_{i,1}(x,t) \cdot \prod_{j=2}^{r_i} \left(H'_{i,j}(x, \varphi_{i,j-1}(x))^{-1} \cdot H'_{i,j}(x,t)\right) \text{.}
\end{equation*}
We demonstrate that $\widetilde{H_i}(x,t)=H(x,t)$ for all $(x,t) \in U_i \times [0,1]$.
Fix a point $(x,t)  \in U_i \times [0,1]$.
We have $\varphi_{i,j-1}(x) \leq t \leq \varphi_{i,j}(x)$ for some $1 \leq j \leq r_i$.
If $j'>j$, we have $H'_{i,j'}(x,t)=H'_{i,j'}(x, \varphi_{i,j'-1}(x))$.
If $j'<j$, we have $H'_{i,j'}(x,t)=H'_{i,j'}(x, \varphi_{i,j'}(x))=H(x,\varphi_{i,j'}(x))=H'_{i,j'+1}(x,\varphi_{i,j'}(x))$.
Therefore, we get 
\begin{align*}
\widetilde{H_i}(x,t)&=\prod_{j'=1}^{j-1} \left(H'_{i,j'}(x,t) \cdot H'_{i,j'+1}(x, \varphi_{i,j'}(x))^{-1}\right)\\
&\qquad  \cdot H'_{i,j}(x,t) \cdot \prod_{j'=j+1}^{r_i} \left(H'_{i,j'}(x, \varphi_{i,j'-1}(x))^{-1} \cdot H'_{i,j'}(x,t)\right) \\
&= I \cdot  H(x,t) \cdot I = H(x,t)\text{.}
\end{align*}
The map $\widetilde{H_i}$ is of the same form as the right hand of the equality in Claim 1.
We have finished the proof of Claim 1.
\medskip

For all $1 \leq i \leq p$, take definable open subsets $W_i$ and $W'_i$ of $M$ such that $\overline{W'_i} \subset W_i \subset \overline{W_i} \subset U_i$ 
and $\{W'_i\}_{i=1}^p$ is a definable open covering of $M$ by Lemma \ref{lem:covering}.
We next take a definable continuous function $\varphi_i:M \rightarrow [0,1]$ such that the restriction of $\varphi_i$ to $\overline{W'_i}$ is one and the restriction of $\varphi_i$ to the complement of $W_i$ is zero using Lemma \ref{lem:sep}.
Define definable continuous maps $s_i:M \times [0,1] \rightarrow [0,1]$ and $r_i: M \times [0,1] \rightarrow M \times [0,1]$ by $s_i(x,t)=\max(t,\varphi_i(x))$ and $r_i(x,t)=(x,s_i(x,t))$, respectively.
We show the following claim:

\medskip
{\bf{Claim 2.}} For any definable continuous function $\eta:M \rightarrow [0,1]$, the images of $H(x,\eta(x))$ and $H(r_i(x,\eta(x)))$ are identical in $\sk(\cdfz(M))$.
\medskip

We begin to prove Claim 2.
Take $K_i$, $i(i,k)$, $j(i,k)$ and $a_{ik}(x)$ as in Claim 1.
We may assume that $M$ is a definable $C^r$ submanifold of a Euclidean space $ R^n$ which is simultaneously closed in $ R^n$.
In fact, $M$ is a definable $C^r$ submaniold of a Euclidean space $ R^n$ because $M$ is affine.
Take a definable $C^r$ function $H$ on $ R^n$ with $H^{-1}(0)=\overline{M} \setminus M$ using \cite[Theorem C.11]{vdDM}.
The image of $M$ under the definable immersion $\iota:M \rightarrow  R^{n+1}$ given by $\iota(x)=\left(x,1/H(x)\right)$ is a definable closed subset of $ R^{n+1}$.

Consider the restriction of $a_{ik}(x,\eta(x))$ to $\overline{W_i}$.
There exists a definable continuous extension $\overline{a}_{ik}$ of this function to $M$ by the definable Tietze extension theorem \cite[Chapter 8, Corollary 3.10]{vdD} using the fact that $M$ is closed in $ R^n$.
Consider definable functions $c_{ik}$ on $M$ given by 
\begin{equation*}
c_{ik}(x)=\left\{\begin{array}{ll} a_{ik}(r_i(x,\eta(x))) & \text{ if } x \in \overline{W_i} \text{,}\\
\overline{a}_{ik}(x) & \text{ otherwise.}\end{array}\right.
\end{equation*}
They are continuous because $r_i(x,\eta(x))=(x,\eta(x))$ if $x \in M \setminus W_i$. 
Set 
\begin{align*}
G(x)&= \left(\prod_{k=1}^{K_i} e(i(i,K_i+1-k), j(i,K_i+1-k); -\overline{a}_{i,K_i+1-k}(x)) \right) \\
&\qquad \cdot \left(\prod_{k=1}^{K_i} e(i(i,k),j(i,k),c_{ik}(x)) \right) \text{.}
\end{align*}
It is obvious that $G(x) \in \sk(\cdfz(M))$.
We show that $G(x)=(H(x,\eta(x)))^{-1} \cdot H(r_i(x,\eta(x)))$.
When $x \in \overline{W_i}$, we have
\begin{align*}
G(x)&= \left(\prod_{k=1}^{K_i} e(i(i,K_i+1-k), j(i,K_i+1-k); -{a_{i,K_i+1-k}}(x,\eta(x))) \right) \\
&\qquad \cdot \left(\prod_{k=1}^{K_i} e(i(i,k),j(i,k),a_{i,k}(r_i(x,\eta(x)))) \right)\\
&= \left(\prod_{k=1}^{K_i} e(i(i,k),j(i,k),a_{i,k}(x,\eta(x))) \right)^{-1} \\
&\qquad \cdot \left(\prod_{k=1}^{K_i} e(i(i,k),j(i,k),a_{i,k}(r_i(x,\eta(x)))) \right)\\
&=(H(x,\eta(x)))^{-1} \cdot H(r_i(x,\eta(x))) \text{.}
\end{align*}
On the other hand, if $x \not\in \overline{W_i}$, we get
\begin{align*}
G(x)&= \left(\prod_{k=1}^{K_i} e(i(i,K_i+1-k), j(i,K_i+1-k); -\overline{a}_{i,K_i+1-k}(x)) \right) \\
&\qquad \cdot \left(\prod_{k=1}^{K_i} e(i(i,k),j(i,k),\overline{a}_{i,k}(x)) \right)\\
&= \left(\prod_{k=1}^{K_i} e(i(i,k),j(i,k),\overline{a}_{i,k}(x)) \right)^{-1} 
\cdot \left(\prod_{k=1}^{K_i} e(i(i,k),j(i,k),\overline{a}_{i,k}(x)) \right)\\
&=(H(x,\eta(x)))^{-1} \cdot H(x,\eta(x))= (H(x,\eta(x)))^{-1} \cdot H(r_i(x,\eta(x)))
\end{align*}
because we have $r_i(x,\eta(x))=(x,\eta(x))$ in this case.
We have shown that $G(x)=(H(x,\eta(x)))^{-1} \cdot H(r_i(x,\eta(x))) \in \comm(\cdfz(M))$.
We have proven Claim 2.

\medskip
Using Claim 2, we have $A=H(x,0) \equiv H(r_p \circ r_{p-1} \circ \cdots \circ r_1(x,0)) = H(x,1)=B$ modulo $\comm(\cdfz(M))$.
We have finished the proof of the theorem.
\end{proof}

\begin{cor}
The Whitehead group of $\cdfr( R^n)$ is isomorphic to the group $(\cdfr( R^n))^\times$.
\end{cor}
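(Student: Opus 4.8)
The plan is to deduce this from the vanishing of $\sk(\cdfz(R^n))$. By the isomorphism $K_1(A)\cong A^\times\times\sk(A)$ applied with $A=\cdfr(R^n)$, it suffices to show that $\sk(\cdfr(R^n))$ is the trivial group; and since Theorem \ref{thm:isom_whitehead} gives $\sk(\cdfr(R^n))\cong\sk(\cdfz(R^n))$, it is enough to prove $\sk(\cdfz(R^n))=0$.

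To do this I would take an arbitrary $A\in\slin(m,\cdfz(R^n))$, for some positive integer $m$, and use the standard definable contraction of $R^n$ onto the origin to homotope $A$ to a constant matrix. Explicitly, define $H\colon R^n\times[0,1]\to\slin(m,R)$ by $H(x,t)=A((1-t)x)$. As $(x,t)\mapsto(1-t)x$ is a definable map and $A$ is a definable continuous map into $\slin(m,R)$, the map $H$ is definable and continuous, and satisfies $H(\cdot,0)=A(\cdot)$ while $H(\cdot,1)$ is the constant map with value $A(0)\in\slin(m,R)$. The constant map $x\mapsto A(0)$ lies in $\slin(m,\cdfz(R^n))$, so Theorem \ref{thm:whitehead_homotopy} yields that $A$ and the constant matrix $A(0)$ have the same image in $\sk(\cdfz(R^n))$.

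It then remains to note that the constant matrix $A(0)\in\slin(m,R)$ represents the identity of $\sk(\cdfz(R^n))$. Since $R$ is a field, Gaussian elimination expresses $A(0)$ as a product $\prod_{k=1}^{N}e(i_k,j_k;r_k)$ of elementary matrices with $r_k\in R$; interpreting each $r_k$ as a constant function in $\cdfz(R^n)$, this exhibits $A(0)$ as an element of $\comm(\cdfz(R^n))$. Hence the image of $A$ in $\sk(\cdfz(R^n))$ is trivial, and as $A$ was arbitrary, $\sk(\cdfz(R^n))=0$. Consequently $\sk(\cdfr(R^n))=0$ by Theorem \ref{thm:isom_whitehead}, and therefore $K_1(\cdfr(R^n))\cong(\cdfr(R^n))^\times\times\sk(\cdfr(R^n))\cong(\cdfr(R^n))^\times$.

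I do not expect a genuine obstacle here: the two substantive inputs (Theorem \ref{thm:isom_whitehead} and Theorem \ref{thm:whitehead_homotopy}) are already available, and the remaining ingredients — definability of the contracting homotopy, which is immediate since it is a composition of definable maps, and the classical fact that $\slin$ over a field is generated by elementary matrices — are routine. The only point worth stating carefully is that the contraction argument must be carried out in the continuous category, where Theorem \ref{thm:whitehead_homotopy} applies, while the passage between the $C^0$ and $C^r$ categories is absorbed entirely into Theorem \ref{thm:isom_whitehead}.
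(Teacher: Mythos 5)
Your proof is correct and follows essentially the same route as the paper, which simply cites Theorem \ref{thm:isom_whitehead} and Theorem \ref{thm:whitehead_homotopy} together with the contractibility of $R^n$. You have merely made explicit the two details the paper leaves implicit: the definable contraction $H(x,t)=A((1-t)x)$ and the fact that a constant matrix in $\slin(m,R)$ lies in $\comm(\cdfz(R^n))$ because $\slin$ over a field is generated by elementary matrices.
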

\begin{proof}
The corollary follows from Theorem \ref{thm:isom_whitehead} and Theorem \ref{thm:whitehead_homotopy} because $R^n$ is homotopy equivalent to a point.
\end{proof}

\end{document}